\theoremstyle{plain}
\newtheorem{thm}{Theorem}[section]
\newtheorem{lem}{Lemma}[section]
\numberwithin{equation}{section}
\DeclareMathOperator{\td}{d\mspace{-2mu}}
\begin{document}

\title[some new Huygens and Wilker type inequalities]
{Refinements and sharpening of some Huygens and Wilker type inequalities}

\author[W.-D. Jiang]{Wei-Dong Jiang}
\address[Jiang]{Department of Information Engineering, Weihai Vocational University, Weihai City, Shandong Province, 264210, China}
\email{\href{mailto: W.-D. Jiang <jackjwd@163.com>}{jackjwd@163.com}}

\author[Q.-M. Luo]{Qiu-Ming Luo}
\address[Luo]{Department of Mathematics, Chongqing Normal University, Chongqing City, 401331, China}
\email{\href{mailto: Q.-M. Luo <luomath@126.com>}{luomath@126.com}, \href{mailto: Q.-M. Luo <luomath2007@163.com>}{luomath2007@163.com}}

\author[F. Qi]{Feng Qi}
\address[Qi]{School of Mathematics and Informatics\\ Henan Polytechnic University\\ Jiaozuo City, Henan Province, 454010\\ China; Department of Mathematics\\ School of Science\\ Tianjin Polytechnic University\\ Tianjin City, 300387\\ China}
\email{\href{mailto: F. Qi <qifeng618@gmail.com>}{qifeng618@gmail.com}, \href{mailto: F. Qi <qifeng618@hotmail.com>}{qifeng618@hotmail.com}, \href{mailto: F. Qi <qifeng618@qq.com>}{qifeng618@qq.com}}
\urladdr{\url{http://qifeng618.wordpress.com}}

\subjclass[2010]{Primary 26D05; Secondary 33B10}

\keywords{Refinement; Sharpening; Huygens inequality; Wilker inequality; Trigonometric function; Hyperbolic function}

\thanks{The first author was partially supported by the Project of Shandong Province Higher Educational Science and Technology Program under grant No.~J11LA57}

\begin{abstract}
In the article, some Huygens and Wilker type inequalities involving trigonometric and hyperbolic functions are refined and sharpened.
\end{abstract}

\thanks{This paper was typeset using \AmS-\LaTeX}

\maketitle

\section{Introduction}
The famous Huygens inequality for the sine and tangent functions states that for $x\in\bigl(0,\frac{\pi}{2}\bigr)$
\begin{equation}\label{eq1.1}
    2\sin x+\tan x>3x.
\end{equation}
The hyperbolic counterpart of \eqref{eq1.1} was established in~\cite{4} as follows: For $x>0$
\begin{equation}\label{eq1.2}
    2\sinh x+\tanh x>3x.
\end{equation}
The inequalities~\eqref{eq1.1} and~\eqref{eq1.2} were respectively refined in~\cite[Theorem~2.6]{4} as
\begin{equation}\label{eq1.3}
    2\frac{\sin x}{x}+\frac{\tan x}{x}>2\frac{x}{\sin x}+\frac{x}{\tan x}>3,\quad 0<x<\frac{\pi}{2}
\end{equation}
and
\begin{equation}\label{eq1.4}
    2\frac{\sinh x}{x}+\frac{\tanh x}{x}>2\frac{x}{\sinh x}+\frac{x}{\tanh x}>3,\quad x\ne0.
\end{equation}
In~\cite{11} the inequality~\eqref{eq1.2} was improved as
\begin{equation}\label{eq3.5}
    2\frac{\sinh x}{x}+\frac{\tanh x}{x}>3+\frac{3}{20}x^4-\frac{3}{56}x^6, \quad x>0.
\end{equation}
In~\cite{4}, the following inequality is given
\begin{equation}\label{eq3.7}
   3\frac{x}{\sin x}+\cos x>4.
\end{equation}
For more information in this area, please refer to~\cite{mia-qi-cui-xu-99, 2}, \cite[Section~1.7 and Section~7.3]{refine-jordan-kober.tex-JIA} and closely related references therein.
\par
In~\cite{12}, Wilker proved
\begin{equation}\label{w}
\biggl(\frac{\sin x}x\biggr)^2+\frac{\tan x}x>2
\end{equation}
and proposed that there exists a largest constant
$c$ such that
\begin{equation}\label{ww}
\biggl(\frac{\sin x}x\biggr)^2+\frac{\tan x}x>2+cx^3\tan x
\end{equation}
for $0<x<\frac{\pi}2$.
In~\cite{13}, the best constant $c$ in~\eqref{ww} was found and it was proved that
\begin{equation}\label{www}
2+\frac8{45}x^3\tan x>\biggl(\frac{\sin x}x\biggr)^2+\frac{\tan
x}x>2+\biggl(\frac2\pi\biggr)^4x^3\tan x
\end{equation}
for $0<x<\frac{\pi}2$. The constants $\frac8{45}$ and
$\bigl(\frac2\pi\bigr)^4$ in the inequality~\eqref{www} are the best possible.
For more information on this topic, please see~\cite{wilker-theory, wilker-mia, 9}, \cite[pp.~38\nobreakdash--40, Section~8]{refine-jordan-kober.tex-JIA} and closely related references therein.
\par
Recently the inequalities~\eqref{eq1.3} and~\eqref{w} were respectively refined in~\cite{7} as
\begin{equation}\label{eq1.7}
2\frac{\sin x}{x}+\frac{\tan x}{x}>\frac{\sin x}{x}+2\frac{\tan (x/2)}{x/2}> 2\frac{x}{\sin x}+\frac{x}{\tan x} >3
\end{equation}
and
\begin{multline}\label{eq1.8}
\biggl(\frac{\sin x}{x}\biggr)^2+\frac{\tan x}{x}> \biggl(\frac{x}{\sin x}\biggr)^2+\frac{x}{\tan x}\\*
>\frac{\sin x}{x}+\biggl[\frac{\tan (x/2)}{x/2}\biggr]^2 >\frac{x}{\sin x}+\biggl[\frac{x/2}{\tan (x/2)}\biggr]^2 >2.
\end{multline}
The hyperbolic counterparts of the last two inequalities in~\eqref{eq1.8} were also given in~\cite{7} as follows:
\begin{equation}\label{eq1.9}
   \frac{\sinh x}{x}+\biggl[\frac{\tanh (x/2)}{x/2}\biggr]^2
   >\frac{x}{\sinh x}+\biggl[\frac{x/2}{\tanh (x/2)}\biggr]^2  >2.
\end{equation}
\par
The aim of this paper is to refine and sharpen some of the above-mentioned Huygens and Wilker type inequalities.

\section{some Lemmas}

In order to attain our aim, we need several lemmas below.

\begin{lem}\label{B-2q-posit}
The Bernoulli numbers $B_{2n}$ for $n\in\mathbb{N}$ have the property
\begin{equation}\label{|B_{2n}|}
(-1)^{n-1}B_{2n}=|B_{2n}|,
\end{equation}
where the Bernoulli numbers $B_i$ for $i\ge0$ are defined by
\begin{equation}
\frac{x}{e^x-1}=\sum_{i=0}^\infty \frac{B_i}{n!}x^i =1-\frac{x}2+\sum_{i=1}^\infty B_{2i}\frac{x^{2i}}{(2i)!}, \quad \vert x\vert <2\pi.
\end{equation}
\end{lem}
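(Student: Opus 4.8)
The plan is to read off the sign of $B_{2n}$ from Euler's classical evaluation of the Riemann zeta function at the even integers, namely
\[
\zeta(2n)=\sum_{k=1}^\infty\frac1{k^{2n}}=\frac{(-1)^{n-1}(2\pi)^{2n}}{2\,(2n)!}\,B_{2n},\qquad n\in\mathbb N .
\]
Granting this identity, the conclusion is immediate: the defining series of $\zeta(2n)$ is a sum of strictly positive terms, so $\zeta(2n)>0$, and the coefficient $\frac{(2\pi)^{2n}}{2(2n)!}$ is positive as well. Therefore $(-1)^{n-1}B_{2n}>0$, which is precisely the claim $(-1)^{n-1}B_{2n}=|B_{2n}|$.

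To keep the argument self-contained I would derive Euler's formula by expanding one function, $\pi x\cot(\pi x)$, in two different ways and comparing coefficients. Starting from the generating function in the statement and the identity $\frac{x}{e^x-1}+\frac x2=\frac x2\coth\frac x2$, the substitution $x\mapsto2\mathrm ix$ (which converts $\coth$ into $\cot$) gives the Bernoulli expansion
\[
x\cot x=1+\sum_{n=1}^\infty\frac{(-1)^n2^{2n}B_{2n}}{(2n)!}\,x^{2n},\qquad|x|<\pi .
\]
On the other hand, the Mittag-Leffler partial-fraction expansion
\[
\pi\cot(\pi x)=\frac1x+\sum_{k=1}^\infty\frac{2x}{x^2-k^2},
\]
expanded as a geometric series for $|x|<1$, rearranges into $\pi\cot(\pi x)=\frac1x-2\sum_{n=1}^\infty\zeta(2n)\,x^{2n-1}$. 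Rescaling $x\mapsto\pi x$ in the first expansion and matching the coefficients of $x^{2n-1}$ then yields exactly the stated value of $\zeta(2n)$.

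The routine coefficient bookkeeping is not where the difficulty lies; the genuine analytic content is the justification of the cotangent partial-fraction expansion together with the interchange of the two summations when each term $\frac{2x}{x^2-k^2}$ is developed into its own power series. I expect this interchange to be the main obstacle, and I would dispatch it on a compact disc $|x|\le r<1$ by dominating $\bigl|\frac{2x}{x^2-k^2}\bigr|\le\frac{Cr}{k^2}$ for large $k$, so that the Weierstrass $M$-test legitimises the rearrangement into $\sum_{n}\zeta(2n)x^{2n-1}$. Once this is in place the positivity of $\zeta(2n)$ finishes the proof; since the sign pattern is entirely classical, one could instead simply cite Euler's formula and omit the derivation altogether.
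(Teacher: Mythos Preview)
Your proposal is correct and follows essentially the same approach as the paper: both read off the sign of $B_{2n}$ from Euler's identity $\zeta(2n)=(-1)^{n-1}\dfrac{(2\pi)^{2n}}{2\,(2n)!}B_{2n}$ together with the obvious positivity of $\zeta(2n)$. The only difference is that the paper simply cites this identity from a standard reference, whereas you sketch a self-contained derivation via the two expansions of $\pi x\cot(\pi x)$; as you yourself note at the end, that extra work can be replaced by a citation, which is exactly what the paper does.
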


\begin{proof}
In~\cite[p.~16 and p.~56]{aar}, it is listed that for $q\ge1$
\begin{equation}\label{zeta(2q)-B(2q)}
\zeta(2q)=(-1)^{q-1}\frac{(2\pi)^{2q}}{(2q)!}\frac{B_{2q}}2,
\end{equation}
where $\zeta$ is the Riemann zeta function defined by
\begin{equation}
  \zeta(s)=\sum_{n=1}^\infty\frac1{n^s}.
\end{equation}
From~\eqref{zeta(2q)-B(2q)}, the formula~\eqref{|B_{2n}|} follows.
\end{proof}

\begin{lem}\label{lem2.1}
For $0<|x|<\pi$, we have
\begin{equation}\label{eq2.1}
    \frac{x}{\sin x}=1+\sum_{n=1}^{\infty} \frac{2\bigl(2^{2n-1}-1\bigr)|B_{2n}|}{(2n)!}x^{2n}.
\end{equation}
\end{lem}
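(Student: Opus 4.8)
The plan is to reduce the expansion of $\frac{x}{\sin x}$ to the classical Maclaurin series of the cotangent function together with the half-angle identity $\csc x=\cot\frac x2-\cot x$. First I would recall, citing a standard reference such as~\cite{aar}, the well-known expansion
\[
x\cot x=1-\sum_{n=1}^\infty\frac{2^{2n}|B_{2n}|}{(2n)!}x^{2n},\qquad 0<|x|<\pi,
\]
where I have already invoked the sign property $(-1)^{n-1}B_{2n}=|B_{2n}|$ from Lemma~\ref{B-2q-posit} in order to convert the signed Bernoulli coefficients into absolute values. The radius of convergence of this series is $\pi$, dictated by the poles of $\cot$ at the nonzero integer multiples of $\pi$.

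Next I would verify the elementary identity
\[
\frac{1}{\sin x}=\cot\frac x2-\cot x,
\]
which follows immediately from $\cot\frac x2=\frac{1+\cos x}{\sin x}$ and $\cot x=\frac{\cos x}{\sin x}$, since the two cosine contributions cancel. Multiplying through by $x$ gives $\frac{x}{\sin x}=x\cot\frac x2-x\cot x$, which reduces the problem to substituting the single known expansion into two places.

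Then I would carry out that substitution. Replacing $x$ by $\frac x2$ in the cotangent series yields $\frac x2\cot\frac x2=1-\sum_{n=1}^\infty\frac{|B_{2n}|}{(2n)!}x^{2n}$, because the factor $2^{2n}$ cancels against $(x/2)^{2n}=x^{2n}/2^{2n}$; hence $x\cot\frac x2=2-\sum_{n=1}^\infty\frac{2|B_{2n}|}{(2n)!}x^{2n}$. Subtracting $x\cot x$ and collecting like powers, the constant terms combine as $2-1=1$, while the coefficient of $x^{2n}$ becomes $\frac{(2^{2n}-2)|B_{2n}|}{(2n)!}$. Writing $2^{2n}-2=2\bigl(2^{2n-1}-1\bigr)$ then produces exactly the claimed formula~\eqref{eq2.1}.

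The only point requiring care is the convergence bookkeeping: the series for $x\cot\frac x2$ converges on $|x|<2\pi$ whereas the series for $x\cot x$ converges only on $|x|<\pi$, so their termwise difference is valid precisely on $0<|x|<\pi$, matching the stated range. I do not expect a genuine obstacle beyond justifying this term-by-term subtraction on the common domain and tracking the half-angle rescaling accurately.
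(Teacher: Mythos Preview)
Your proof is correct. It differs from the paper's approach in one respect: the paper simply quotes the cosecant expansion
\[
\csc x=\frac1x+\sum_{n=1}^{\infty}\frac{(-1)^{n-1}2\bigl(2^{2n-1}-1\bigr)B_{2n}}{(2n)!}x^{2n-1}
\]
directly from~\cite[p.~75, 4.3.68]{abram}, multiplies by $x$, and invokes Lemma~\ref{B-2q-posit} to rewrite the signs, whereas you \emph{derive} this expansion from the cotangent series (which is in fact the paper's Lemma~\ref{lem2.0}) via the half-angle identity $\csc x=\cot\frac x2-\cot x$. Your route is marginally longer but has the virtue of reducing the cosecant coefficients to the more primitive cotangent series already recorded in the paper; the paper's route is shorter because it takes the cosecant formula as tabulated. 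Both approaches are standard and either is acceptable here.
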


\begin{proof}
This is an easy consequence of combining the equality
\begin{equation}\label{4.3.68}
\frac1{\sin x}=    \csc x=\frac1x +\sum_{n=1}^{\infty} \frac{(-1)^{n-1}2\bigl(2^{2n-1}-1\bigr)B_{2n}}{(2n)!}x^{2n-1},
\end{equation}
see~\cite[p.~75, 4.3.68]{abram}, with Lemma~\ref{B-2q-posit}.
\end{proof}

\begin{lem}[{\cite[p.~75, 4.3.70]{abram}}]\label{lem2.0}
For $0<|x|<\pi$,
\begin{equation}\label{eq2.0}
   \cot x=\frac{1}{x}-\sum_{n=1}^{\infty}\frac{2^{2n}|B_{2n}|}{(2n)!}x^{2n-1}.
\end{equation}
\end{lem}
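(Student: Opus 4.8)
The plan is to give a self-contained derivation that uses only the machinery already assembled in this section, namely the zeta--Bernoulli formula \eqref{zeta(2q)-B(2q)} and the sign property \eqref{|B_{2n}|} of Lemma~\ref{B-2q-posit}. The natural starting point is the Mittag--Leffler partial-fraction expansion of the cotangent,
\begin{equation*}
\cot x=\frac1x+\sum_{k=1}^{\infty}\frac{2x}{x^2-k^2\pi^2},
\end{equation*}
valid whenever $x$ is not an integer multiple of $\pi$; this is classical and follows from taking the logarithmic derivative of the Euler product $\sin x=x\prod_{k\ge1}\bigl(1-\frac{x^2}{k^2\pi^2}\bigr)$.

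First I would expand each summand into a geometric series. For $0<|x|<\pi$ one has $|x/(k\pi)|<1$ for every $k\ge1$, so
\begin{equation*}
\frac{2x}{x^2-k^2\pi^2}=-\frac{2x}{k^2\pi^2}\sum_{m=0}^{\infty}\Bigl(\frac{x}{k\pi}\Bigr)^{2m}.
\end{equation*}
Substituting this into the partial-fraction expansion, interchanging the two summations, relabelling $n=m+1$, and recognizing the inner sum as a zeta value yields
\begin{equation*}
\cot x=\frac1x-\sum_{n=1}^{\infty}\Biggl(\frac{2}{\pi^{2n}}\sum_{k=1}^{\infty}\frac1{k^{2n}}\Biggr)x^{2n-1}=\frac1x-\sum_{n=1}^{\infty}\frac{2\zeta(2n)}{\pi^{2n}}x^{2n-1}.
\end{equation*}

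It then remains to convert the zeta values into Bernoulli numbers. Inserting \eqref{zeta(2q)-B(2q)} collapses the coefficient to $\frac{2\zeta(2n)}{\pi^{2n}}=(-1)^{n-1}\frac{2^{2n}B_{2n}}{(2n)!}$, where the powers of $\pi$ cancel exactly against the $(2\pi)^{2n}$ in \eqref{zeta(2q)-B(2q)}. Finally the sign property \eqref{|B_{2n}|} reduces $(-1)^{n-1}B_{2n}$ to $|B_{2n}|$, producing precisely the coefficient $\frac{2^{2n}|B_{2n}|}{(2n)!}$ with the single minus sign in front, as asserted in \eqref{eq2.0}.

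The main obstacle is analytic rather than algebraic: justifying the interchange of the double sum. On any compact subinterval $|x|\le r<\pi$ the iterated series of absolute values converges, since summing first over $m$ bounds the $k$-th block by $\frac{2r}{k^2\pi^2-r^2}=O(k^{-2})$, which is summable in $k$; hence Tonelli's theorem for series legitimizes the rearrangement, and the resulting power series converges on $0<|x|<\pi$. A secondary, purely bookkeeping point is tracking the two separate sign factors, the $(-1)^{n-1}$ coming from \eqref{zeta(2q)-B(2q)} and the one absorbed through \eqref{|B_{2n}|}; these combine cleanly so that every term carries the same negative sign.
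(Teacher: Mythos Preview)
Your argument is correct. The partial-fraction expansion of $\cot x$, the geometric-series step, the interchange justified by absolute convergence on $|x|\le r<\pi$, and the conversion via \eqref{zeta(2q)-B(2q)} and \eqref{|B_{2n}|} all go through exactly as you describe, and the bookkeeping of signs is right.

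The paper, however, does not prove this lemma at all: it is simply quoted from the Abramowitz--Stegun handbook, with the citation placed in the lemma header and no proof environment following. So your route is genuinely different in that you supply a derivation where the paper supplies none. What the paper's choice buys is economy, since \eqref{eq2.0} is a standard tabulated identity and the authors only need it as input to Lemmas~\ref{lem2.2} and~\ref{lem2.4}. What your approach buys is self-containment: you reduce \eqref{eq2.0} to the same zeta--Bernoulli relation \eqref{zeta(2q)-B(2q)} already invoked in Lemma~\ref{B-2q-posit}, so no external reference is needed, and the proof sits naturally alongside the derivations of \eqref{eq2.1}--\eqref{eq2.5}.
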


\begin{lem}\label{lem2.2}
For $0<|x|<\pi$,
\begin{equation}\label{eq2.2}
\frac{1}{\sin ^2x}=\frac{1}{x^2} +\sum_{n=1}^{\infty}\frac{2^{2n}(2n-1)|B_{2n}|}{(2n)!}x^{2(n-1)}.
\end{equation}
\end{lem}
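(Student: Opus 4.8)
The plan is to derive \eqref{eq2.2} directly from the cotangent expansion in Lemma~\ref{lem2.0} by differentiation, exploiting the elementary identity
\begin{equation*}
\frac{\td}{\td x}\cot x=-\csc^2x=-\frac{1}{\sin^2x}.
\end{equation*}
Thus it suffices to differentiate the right-hand side of \eqref{eq2.0} and negate the result.

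First I would differentiate \eqref{eq2.0} term by term. The leading term $\frac1x$ contributes $-\frac1{x^2}$, while the general summand $\frac{2^{2n}|B_{2n}|}{(2n)!}x^{2n-1}$ contributes $\frac{2^{2n}(2n-1)|B_{2n}|}{(2n)!}x^{2n-2}$. Since $\frac{\td}{\td x}\cot x=-\csc^2x$, collecting the overall sign yields
\begin{equation*}
\frac{1}{\sin^2x}=\frac{1}{x^2}+\sum_{n=1}^{\infty}\frac{2^{2n}(2n-1)|B_{2n}|}{(2n)!}x^{2n-2},
\end{equation*}
and rewriting $2n-2=2(n-1)$ gives exactly \eqref{eq2.2}.

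The only point requiring justification is the legitimacy of differentiating the series in \eqref{eq2.0} term by term, and this is not a genuine obstacle. The nonelementary part $\sum_{n\ge1}\frac{2^{2n}|B_{2n}|}{(2n)!}x^{2n-1}$ of \eqref{eq2.0} is a power series converging for $0<|x|<\pi$, and a power series may be differentiated term by term on the interior of its interval of convergence, where it converges uniformly on compact subsets; hence the differentiated series remains valid on the same punctured interval $0<|x|<\pi$, which completes the argument. Alternatively one could expand $\frac{1}{\sin^2x}$ by squaring the cosecant series \eqref{4.3.68} via a Cauchy product, or by combining $\csc^2x=1+\cot^2x$ with \eqref{eq2.0}, but the termwise differentiation route is the shortest and avoids all convolution bookkeeping.
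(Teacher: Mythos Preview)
Your proof is correct and follows exactly the same route as the paper: the paper's proof simply notes that $\frac{1}{\sin^2 x}=-\frac{\td}{\td x}(\cot x)$ and states that \eqref{eq2.2} follows by differentiating \eqref{eq2.0}. Your write-up is in fact more detailed than the paper's, since you justify the termwise differentiation explicitly.
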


\begin{proof}
Since
$$
\frac{1}{\sin ^2x}=\csc^2x=-\frac{\td}{\td x}(\cot x),
$$
the formula~\eqref{eq2.2} follows from differentiating~\eqref{eq2.0}.
\end{proof}

\begin{lem}\label{lem2.3}
For $0<|x|<\pi$,
\begin{equation}\label{eq2.3}
\frac{\cos x}{\sin ^2x}=\frac{1}{x^2}-\sum_{n=1}^{\infty} \frac{2(2n-1)\bigl(2^{2n-1}-1\bigr)|B_{2n}|}{(2n)!}x^{2(n-1)}.
\end{equation}
\end{lem}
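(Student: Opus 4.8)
The plan is to recognize that $\frac{\cos x}{\sin^2x}$ is, up to a sign, the derivative of the cosecant, in exact parallel with the proof of Lemma~\ref{lem2.2} (which differentiated $\cot x$ to obtain $\csc^2x$). Indeed, since
\[
\frac{\td}{\td x}\csc x=\frac{\td}{\td x}\frac1{\sin x}=-\frac{\cos x}{\sin^2x},
\]
we have $\frac{\cos x}{\sin^2x}=-\frac{\td}{\td x}\csc x$. So I would start from the Bernoulli expansion of $\csc x$ recorded in~\eqref{4.3.68} and differentiate it term by term.

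First I would rewrite~\eqref{4.3.68} using the positivity property of Lemma~\ref{B-2q-posit}. Since $(-1)^{n-1}B_{2n}=|B_{2n}|$, the expansion~\eqref{4.3.68} reads
\[
\csc x=\frac1x+\sum_{n=1}^{\infty}\frac{2\bigl(2^{2n-1}-1\bigr)|B_{2n}|}{(2n)!}x^{2n-1},\qquad 0<|x|<\pi,
\]
which is the same normalization already used in Lemma~\ref{lem2.1}. Next I would differentiate this series term by term: after stripping off the leading $1/x$, what remains is a genuine power series convergent for $0<|x|<\pi$, so term-by-term differentiation is legitimate on that punctured interval. The derivative of $1/x$ contributes $-1/x^2$, while the derivative of $x^{2n-1}$ contributes $(2n-1)x^{2n-2}$, giving
\[
\frac{\td}{\td x}\csc x=-\frac1{x^2}+\sum_{n=1}^{\infty}\frac{2(2n-1)\bigl(2^{2n-1}-1\bigr)|B_{2n}|}{(2n)!}x^{2n-2}.
\]

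Finally I would negate both sides and use $2n-2=2(n-1)$ to arrive at exactly~\eqref{eq2.3}. There is essentially no analytic obstacle here; the only points demanding care are the bookkeeping of the overall sign and the index shift in the exponent, together with the (entirely routine) justification that term-by-term differentiation holds inside the radius of convergence. For completeness I note an alternative route, namely writing $\frac{\cos x}{\sin^2x}=\csc x\cot x$ and forming the Cauchy product of the series from~\eqref{4.3.68} and Lemma~\ref{lem2.0}; this, however, requires a convolution of coefficients and is strictly more laborious than the one-line differentiation above, so the derivative approach is preferable.
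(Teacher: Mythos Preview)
Your proposal is correct and matches the paper's own proof exactly: the paper simply says the formula ``follows from differentiating on both sides of~\eqref{4.3.68} and using~\eqref{|B_{2n}|},'' which is precisely the route you take (recognize $\frac{\cos x}{\sin^2x}=-\frac{\td}{\td x}\csc x$, differentiate the Bernoulli expansion of $\csc x$ term by term, and absorb the sign via $(-1)^{n-1}B_{2n}=|B_{2n}|$). Your added remarks on term-by-term differentiability and the alternative Cauchy-product route are sound but go beyond what the paper records.
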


\begin{proof}
This follows from differentiating on both sides of~\eqref{4.3.68} and using~\eqref{|B_{2n}|}.
\end{proof}

\begin{lem}\label{lem2.4}
For $0<|x|<\pi$,
\begin{multline}\label{eq2.4}
\frac{1}{\sin^3 x}=\frac1{2x}+\frac1{x^3} \\* +\frac12\sum_{n=1}^\infty\frac1{(2n-1)!}\biggl[\frac{\bigl(2^{2n+1}-1\bigr)|B_{2n+2}|}{n+1} +\frac{\bigl(2^{2n-1}-1\bigr)|B_{2n}|}n\biggr]x^{2n-1}
\end{multline}
and
\begin{equation}\label{eq2.5}
\frac{\cos x}{\sin^3x}=\frac{1}{x^3}- \sum_{n=2}^{\infty}\frac{(2n-1)(n-1)2^{2n}|B_{2n}|}{(2n)!}x^{2n-3}.
\end{equation}
\end{lem}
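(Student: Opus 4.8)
Both formulas can be obtained by differentiating series that are already established earlier in this section; the only genuine work is the bookkeeping needed to recast the resulting sums into the stated closed forms.

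I would begin with \eqref{eq2.5}, which is the cleaner of the two. The key observation is the elementary identity
\[
\frac{\cos x}{\sin^3x}=-\frac12\frac{\td}{\td x}\biggl(\frac1{\sin^2x}\biggr),
\]
so that \eqref{eq2.5} follows by termwise differentiation of \eqref{eq2.2}. Differentiating the general term $\frac{2^{2n}(2n-1)|B_{2n}|}{(2n)!}x^{2(n-1)}$ produces a factor $2(n-1)$, which annihilates the $n=1$ term and leaves a sum starting at $n=2$; multiplying by $-\frac12$ and combining with the derivative $-\frac2{x^3}$ of the leading term $\frac1{x^2}$ gives exactly the right-hand side of \eqref{eq2.5}.

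For \eqref{eq2.4} I would exploit a similar derivative relation, but now involving $\frac{\cos x}{\sin^2x}$. A direct computation of its derivative, together with $\cos^2x=1-\sin^2x$, yields
\[
\frac{\td}{\td x}\biggl(\frac{\cos x}{\sin^2x}\biggr)=\frac1{\sin x}-\frac2{\sin^3x},
\]
whence
\[
\frac1{\sin^3x}=\frac12\biggl[\frac1{\sin x}-\frac{\td}{\td x}\biggl(\frac{\cos x}{\sin^2x}\biggr)\biggr].
\]
Both ingredients on the right are already expanded: $\frac1{\sin x}$ is \eqref{4.3.68} rewritten via Lemma~\ref{B-2q-posit}, and $\frac{\cos x}{\sin^2x}$ is \eqref{eq2.3}. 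Differentiating \eqref{eq2.3} termwise (again the $n=1$ term drops out), substituting, and peeling off the $\frac1x$ and $\frac1{x^3}$ singular contributions gives $\frac1{2x}+\frac1{x^3}$ plus two infinite sums.

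The main obstacle, and the only part requiring care, is reconciling these two sums into the single bracketed expression of \eqref{eq2.4}. The sum coming from $\frac1{\sin x}$ already carries powers $x^{2n-1}$, while the sum coming from the derivative of \eqref{eq2.3} carries powers $x^{2n-3}$ and must be reindexed by $n\mapsto n+1$. After this shift one simplifies the factorial coefficients using $(2n+2)!=(2n+2)(2n+1)(2n)!$ and $(2n)!=2n\,(2n-1)!$; the algebra collapses the coefficient of the shifted sum to $\frac{(2^{2n+1}-1)|B_{2n+2}|}{(n+1)(2n-1)!}$ and that of the $\frac1{\sin x}$ sum to $\frac{(2^{2n-1}-1)|B_{2n}|}{n\,(2n-1)!}$, which are precisely the two terms inside the brackets of \eqref{eq2.4}.
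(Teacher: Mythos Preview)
Your proposal is correct and follows essentially the same route as the paper: the paper also derives \eqref{eq2.5} from $\frac{\cos x}{\sin^3x}=-\frac12\bigl(\frac1{\sin^2x}\bigr)'$ together with Lemma~\ref{lem2.2}, and derives \eqref{eq2.4} from $\frac{1}{\sin^3 x}=\frac{1}{2\sin x}-\frac12\bigl(\frac{\cos x}{\sin^2x}\bigr)'$ together with Lemma~\ref{lem2.3}, \eqref{4.3.68}, and Lemma~\ref{B-2q-posit}. Your write-up simply spells out the reindexing and factorial simplifications that the paper leaves implicit.
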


\begin{proof}
Combining
\begin{equation*}
\frac{1}{\sin ^3 x}=\frac{1}{2\sin x}-\frac12\biggl(\frac{\cos x}{\sin^2x}\biggr)'
\end{equation*}
with Lemma~\ref{lem2.3}, the identity~\eqref{4.3.68}, and Lemma~\ref{B-2q-posit} gives~\eqref{eq2.4}.
\par
The equality~\eqref{eq2.5} follows from combination of
\begin{equation*}
\frac{\cos x}{\sin ^3 x}=-\frac12\biggl(\frac{1}{\sin^2x}\biggr)'
\end{equation*}
with Lemma~\ref{lem2.2}.
\end{proof}

\begin{lem}\label{lem2.5}
Let $f$ and $g$ be continuous on $[a,b]$ and differentiable in $(a,b)$ such that $g'(x)\ne0$ in $(a,b)$. If $\frac{f'(x)}{g'(x)}$ is increasing $($or decreasing$)$ in $(a,b)$, then the functions $\frac{f(x)-f(b)}{g(x)-g(b)}$ and $\frac{f(x)-f(a)}{g(x)-g(a)}$ are also increasing $($or decreasing$)$ in
$(a,b)$.
\end{lem}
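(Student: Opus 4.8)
The plan is to establish the monotonicity by computing the derivative of each quotient and then invoking Cauchy's mean value theorem to express the quotient itself as a value of $\frac{f'}{g'}$ at an intermediate point, so that the assumed monotonicity of $\frac{f'}{g'}$ controls the sign of the derivative.

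First I would record a preliminary fact about $g$. Since $g'(x)\ne0$ throughout $(a,b)$ and derivatives have the intermediate value property (Darboux's theorem), $g'$ keeps a constant sign on $(a,b)$; hence $g$ is strictly monotone there, and in particular $g(x)-g(a)\ne0$ and $g(x)-g(b)\ne0$ for every $x\in(a,b)$, so both quotients are well defined. I would then treat the increasing case in detail, the decreasing case being entirely analogous with all inequalities reversed. Writing $H(x)=\frac{f(x)-f(a)}{g(x)-g(a)}$, a direct differentiation and factoring of the numerator give
\begin{equation*}
H'(x)=\frac{1}{[g(x)-g(a)]^2}\,g'(x)[g(x)-g(a)]\left(\frac{f'(x)}{g'(x)}-\frac{f(x)-f(a)}{g(x)-g(a)}\right).
\end{equation*}

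By Cauchy's mean value theorem there is $\xi\in(a,x)$ with $\frac{f(x)-f(a)}{g(x)-g(a)}=\frac{f'(\xi)}{g'(\xi)}$; since $\frac{f'}{g'}$ is increasing and $\xi<x$, the bracketed difference is nonnegative. The prefactor $g'(x)[g(x)-g(a)]$ is positive in either sign case for $g'$ (if $g'>0$ then $g(x)>g(a)$, while if $g'<0$ then $g(x)<g(a)$), so $H'(x)\ge0$ and $H$ is increasing. For $K(x)=\frac{f(x)-f(b)}{g(x)-g(b)}$ the same computation applies, now with Cauchy's theorem on $[x,b]$ producing $\eta\in(x,b)$; here $\eta>x$ forces the bracketed difference to be nonpositive, while the prefactor $g'(x)[g(x)-g(b)]$ is negative in both sign cases, so again $K'(x)\ge0$ and $K$ is increasing.

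The only delicate point — and the main obstacle — is the bookkeeping of signs. One must verify, in each of the two sign possibilities for $g'$, that the product of the prefactor with the Cauchy-comparison term comes out with the correct (nonnegative) sign, and then confirm that these signs flip consistently when $\frac{f'}{g'}$ is assumed decreasing rather than increasing. Once this sign analysis is organized carefully, the conclusion for both quotients and both monotonicity directions follows uniformly.
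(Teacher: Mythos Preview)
Your argument is correct and is in fact the standard proof of the monotone form of L'H\^opital's rule: differentiate the quotient, factor the numerator as $g'(x)[g(x)-g(\ast)]$ times a difference of ratios, then invoke Cauchy's mean value theorem to realize the quotient itself as $\frac{f'}{g'}$ at an intermediate point and use the monotonicity hypothesis to control the sign. The sign analysis you outline is accurate in all four cases.

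However, the paper does not actually prove this lemma at all: it simply states the result and refers the reader to several sources (Alzer--Qiu, Huo--Niu--Cao--Qi, Niu--Cao--Qi, Niu--Huo--Cao--Qi) where a proof can be found. So there is no ``paper's own proof'' to compare against; you have supplied a complete argument where the authors chose to cite. Your approach is the one found in those references, so in that sense you are aligned with what the paper implicitly relies on.
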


The above Lemma~\ref{lem2.5} can be found, for examples, in~\cite[p.~292, Lemma~1]{6}, \cite[p.~57, Lemma~2.3]{jordan-generalized-simp.tex}, \cite[p.~92, Lemma~1]{Gene-Jordan-Inequal.tex}, and~\cite[p.~161, Lemma~2.3]{jordan-strengthened.tex}.

\begin{lem}\label{lem2.6}
Let $a_k$ and $b_k$ for $k\in\mathbb{N}$
be real numbers and the power series
\begin{equation}
A(x)=\sum_{k=1}^\infty a_kx^k\quad\text{and}\quad B(x)=\sum_{k=1}^\infty
b_kx^k
\end{equation}
be convergent on $(-R,R)$ for some $R>0$. If $b_k>0$ and the ratio $\frac{a_k%
}{b_k}$ is strictly increasing for $k\in\mathbb{N}$, then the function $%
\frac{A(x)}{B(x)}$ is also strictly increasing on $(0,R)$.
\end{lem}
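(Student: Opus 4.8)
The plan is to prove that the derivative of $A(x)/B(x)$ is positive on $(0,R)$. Since every power series may be differentiated term by term inside its interval of convergence, and since products of power series may be multiplied and rearranged freely there by absolute convergence, I would first reduce the claim to showing that the numerator
\[
W(x) = A'(x)B(x) - A(x)B'(x)
\]
of the quotient derivative $(A/B)' = W/B^2$ is strictly positive on $(0,R)$. The denominator $B(x)^2$ is automatically positive there, because $b_k>0$ and $x>0$ force $B(x)=\sum_{k\ge 1}b_k x^k>0$.

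Next I would expand $W(x)$ as a power series. Writing $A'(x)=\sum_{k\ge 1}k a_k x^{k-1}$ and $B'(x)=\sum_{j\ge 1}j b_j x^{j-1}$ and forming the two Cauchy products gives
\[
W(x)=\sum_{k\ge 1}\sum_{j\ge 1}(k-j)\,a_k b_j\,x^{k+j-1}.
\]
Collecting terms of equal total degree, the coefficient of $x^{n-1}$ equals $\sum_{k+j=n}(k-j)a_k b_j$.

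The decisive step is to symmetrize this coefficient by pairing the index pair $(k,j)$ with $(j,k)$:
\[
\sum_{k+j=n}(k-j)a_k b_j=\frac12\sum_{k+j=n}(k-j)\bigl(a_k b_j-a_j b_k\bigr)=\frac12\sum_{k+j=n}(k-j)\,b_k b_j\left(\frac{a_k}{b_k}-\frac{a_j}{b_j}\right).
\]
This is exactly where the hypotheses enter: for each pair with $k>j$ both factors $k-j$ and $\frac{a_k}{b_k}-\frac{a_j}{b_j}$ are positive (the latter because $a_k/b_k$ is strictly increasing), while $b_k b_j>0$; for $k<j$ the first two factors both reverse sign, leaving the product positive; and for $k=j$ the summand vanishes. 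Hence each such coefficient $c_n$ is nonnegative, and it is strictly positive as soon as the total degree $n$ admits a pair with $k\ne j$, which is the case for every $n\ge 3$ (for instance $(1,n-1)$ and $(n-1,1)$).

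Consequently $W(x)=\sum_{n\ge 3}c_n x^{n-1}$ with $c_n>0$ for all $n\ge 3$, so $W(x)>0$ for every $x\in(0,R)$, whence $(A/B)'(x)>0$ and $A/B$ is strictly increasing on $(0,R)$. The only care needed is the justification of the term-by-term differentiation and the Cauchy-product rearrangement, both standard consequences of the absolute convergence of power series strictly inside $(-R,R)$; the genuinely substantive idea is the symmetrization, which converts the sign-indefinite weights $(k-j)a_k b_j$ into the manifestly signed quantities $(k-j)\bigl(\frac{a_k}{b_k}-\frac{a_j}{b_j}\bigr)$ controlled by the monotonicity assumption.
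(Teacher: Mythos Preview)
Your argument is correct: the symmetrization step is exactly the standard way this lemma is established, and you have handled the details (positivity of $B$, absolute convergence justifying term-by-term differentiation and the Cauchy product, and the strict positivity of the coefficients for $n\ge 3$) carefully.

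Regarding the comparison with the paper: the paper does not actually prove Lemma~2.6. It simply records the statement and points to the literature (Alzer--Qiu, Qi--Sofo, and Zhu) where proofs can be found. What you have written is therefore not an alternative route but a genuine, self-contained proof supplied where the paper gives only citations. The argument you give is in fact the classical one that appears in those references, so nothing is lost by including it and something is gained in making the paper self-contained.
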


The above Lemma~\ref{lem2.6} can be found, for examples, in~\cite[p.~292, Lemma~2]{6}, \cite[p.~71, Lemma~1]{elliptic-mean-comparison-rev2.tex}, and~\cite[Lemma~2.2]{5}.

\section{Main results}

Now we are in a position to state and prove our main results, refinements and sharpening of some Huygens and Wilker type inequalities mentioned in the first section.

\begin{thm}\label{th3.1}
For $|x|\in\bigl(0,\frac\pi2\bigr)$, we have
\begin{equation}\label{eq3.1}
3+\frac{1}{60}x^3\sin x<2\frac{x}{\sin x}+\frac{x}{\tan x}<3+\frac{8\pi-24}{\pi^3}x^3\sin x.
\end{equation}
The scalars $\frac{1}{60}$ and $\frac{8\pi-24}{\pi^3}$ in~\eqref{eq3.1} are the best possible.
\end{thm}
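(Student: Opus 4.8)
The plan is to reduce the two-sided inequality to the monotonicity of a single quotient. Set
\[
G(x)=\frac{2\frac{x}{\sin x}+\frac{x}{\tan x}-3}{x^3\sin x}
\]
on $\bigl(0,\frac\pi2\bigr)$; since numerator and denominator are both even, it suffices to treat $x\in\bigl(0,\frac\pi2\bigr)$. Once $G$ is shown to be strictly increasing there, both \eqref{eq3.1} and the sharpness of the two constants follow at once: multiplying the strict bounds $G(0^+)<G(x)<G\bigl((\pi/2)^-\bigr)$ by $x^3\sin x>0$ and adding $3$ gives \eqref{eq3.1}, while the fact that the endpoint values are approached but never attained makes $\frac1{60}$ the largest admissible left constant and $\frac{8\pi-24}{\pi^3}$ the smallest admissible right constant.

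I would compute the two limits first. For $x\to0^+$ I combine Lemma~\ref{lem2.1} with Lemma~\ref{lem2.0} (the latter multiplied by $x$) to obtain
\[
2\frac{x}{\sin x}+\frac{x}{\tan x}-3=\sum_{n=2}^\infty\frac{\bigl(2^{2n}-4\bigr)|B_{2n}|}{(2n)!}x^{2n},
\]
where the $n=1$ term drops out because $2^2-4=0$; the leading term is $\frac1{60}x^4$, and together with $x^3\sin x=x^4+O(x^6)$ this yields $\lim_{x\to0^+}G(x)=\frac1{60}$. For $x\to(\pi/2)^-$ direct substitution gives $\frac{x}{\sin x}\to\frac\pi2$, $\frac{x}{\tan x}\to0$, and $x^3\sin x\to\frac{\pi^3}8$, hence $G\to\frac{\pi-3}{\pi^3/8}=\frac{8\pi-24}{\pi^3}$. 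A numerical comparison confirms $\frac1{60}<\frac{8\pi-24}{\pi^3}$, consistent with $G$ being increasing.

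For the monotonicity I clear denominators and write $G=\frac{p}{q}$ with $p(x)=2x+x\cos x-3\sin x$ and $q(x)=x^3\sin^2x$, noting $p(0)=q(0)=0$. By Lemma~\ref{lem2.5} it is enough to show that $\frac{p'(x)}{q'(x)}$ is increasing, where $p'(x)=2-2\cos x-x\sin x$ and $q'(x)=x^2\sin x\,(3\sin x+2x\cos x)$. Equivalently, since $x^2\sin x>0$, the sign of $G'$ is that of
\[
M(x):=x\sin x\,(2-2\cos x-x\sin x)-(2x+x\cos x-3\sin x)(3\sin x+2x\cos x),
\]
and the whole argument reduces to proving $M(x)>0$ on $\bigl(0,\frac\pi2\bigr)$.

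The hard part will be establishing $M(x)>0$. A Taylor expansion shows that $M$ vanishes to eighth order at the origin, so crude estimates near $0$ are useless; moreover, the naive power-series expansions of $\frac{p'}{q'}$ carry alternating signs, which blocks a direct appeal to Lemma~\ref{lem2.6}. My plan is therefore to recast the relevant quotient in the form $\frac{A(x)}{B(x)}=\frac{\sum a_kx^k}{\sum b_kx^k}$ after rearranging the series so that $b_k>0$ and $\frac{a_k}{b_k}$ is monotone, where the $\csc^2$- and $\frac{\cos}{\sin^2}$-type contributions arising from differentiation are re-expressed through the Bernoulli expansions of Lemma~\ref{lem2.2} and Lemma~\ref{lem2.3}; the ratio criterion of Lemma~\ref{lem2.6} then gives the monotonicity of $\frac{p'}{q'}$, and Lemma~\ref{lem2.5} transfers it to $G$. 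Combined with the two limits computed above, this shows that $G$ is strictly increasing and completes the proof of \eqref{eq3.1} together with the sharpness of $\frac1{60}$ and $\frac{8\pi-24}{\pi^3}$.
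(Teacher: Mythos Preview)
Your overall strategy---reduce \eqref{eq3.1} to the strict monotonicity of
$G(x)=\bigl(2\frac{x}{\sin x}+\frac{x}{\tan x}-3\bigr)\big/(x^3\sin x)$
on $\bigl(0,\frac\pi2\bigr)$ and compute its endpoint limits---is exactly what the paper does, and your two limit computations are correct.

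The gap is in the monotonicity step. After multiplying by $\sin x$ to form $p/q$ with $p=2x+x\cos x-3\sin x$ and $q=x^3\sin^2 x$, you note (correctly) that the Maclaurin coefficients of $p'$ and $q'$ alternate in sign, so Lemma~\ref{lem2.6} does not apply directly. Your proposed fix, to ``recast the relevant quotient'' using the Bernoulli expansions of Lemmas~\ref{lem2.2} and~\ref{lem2.3}, is not actually carried out and does not fit the objects at hand: $p'$ and $q'$ are entire functions built from $\sin$, $\cos$, and polynomials, so no $\csc^2$- or $\cos/\sin^2$-type terms ``arise from differentiation'' of $p/q$. As written, the monotonicity of $G$ is asserted but not proved, and the route via Lemma~\ref{lem2.5} is left hanging.

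The paper avoids this detour by going the opposite way: instead of \emph{multiplying} by $\sin x$ to clear denominators, it \emph{divides} numerator and denominator by $\sin x$, writing
\[
G(x)=\frac{1}{x^3}\biggl(\frac{2x}{\sin^2 x}+\frac{x\cos x}{\sin^2 x}-\frac{3}{\sin x}\biggr).
\]
Now each summand is precisely of the type handled by Lemmas~\ref{lem2.1}, \ref{lem2.2}, \ref{lem2.3}, and a short combination of those expansions collapses $G$ into a single power series
\[
G(x)=\sum_{n=2}^{\infty}\frac{(n-2)\,2^{2n+1}+4(n+1)}{(2n)!}\,|B_{2n}|\,x^{2n-4},
\]
whose coefficients are manifestly nonnegative (and strictly positive for $n\ge3$). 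Strict monotonicity of $G$ is then immediate, with no need for Lemma~\ref{lem2.5} or Lemma~\ref{lem2.6}. The missing idea in your proposal is exactly this ``divide rather than multiply'' move; once you make it, the Bernoulli lemmas you cite do the whole job directly.
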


\begin{proof}
Let
\begin{equation*}
f(x)=\frac{\frac{2x}{\sin x}+\frac{x}{\tan x}-3}{x^3\sin x}
=\frac{\frac{2x}{\sin ^2x}+\frac{x\cos x}{\sin ^2x}-\frac3{\sin x}}{x^3}
\end{equation*}
for $x\in\bigl(0,\frac\pi2\bigr)$.
By virtue of~\eqref{eq2.1}, \eqref{eq2.2}, and \eqref{eq2.3}, we have
\begin{align*}
f(x)&=\frac{1}{x^3}\Biggl[\sum_{n=1}^{\infty}\frac{2^{2n+1}(2n-1)}{(2n)!}|B_{2n}|x^{2n-1}
-\sum_{n=1}^{\infty}\frac{\bigl(2^{2n}-2\bigr)}{(2n)!}|B_{2n}|x^{2n-1}\\
&\quad -\sum_{n=1}^{\infty}\frac{\bigl(2^{2n}-2\bigr)(2n-1)}{(2n)!}|B_{2n}|x^{2n-1}\Biggr]\\
&=\sum_{n=1}^{\infty}\frac{2^{2n+1}(2n-1)-(2n-1) \bigl(2^{2n}-2\bigr)-3\bigl(2^{2n}-2\bigr)}{(2n)!}|B_{2n}|x^{2n-4}\\
&=\sum_{n=2}^{\infty}\frac{(n-2)2^{2n+1}+4(n+1)}{(2n)!}|B_{2n}|x^{2n-4}.
\end{align*}
So the function $f(x)$ is strictly increasing on $\bigl(0,\frac\pi2\bigr)$. Moreover, it is easy to obtain
\begin{equation*}
\lim_{x\to0^+}f(x)=\frac1{60}\quad \text{and}\quad \lim_{x\to(\pi/2)^-}f(x)=\frac{8\pi-24}{\pi^3}.
\end{equation*}
The proof of Theorem~\ref{th3.1} is complete.
\end{proof}

\begin{thm}\label{th3.2}
For $0<|x|< \pi/2$,
\begin{equation}\label{eq3.2}
2+\frac{17}{720}x^3\sin x<\frac{x}{\sin x}+\biggl[\frac{x/2}{\tan (x/2)}\biggr]^2 <2+\frac{\pi^2+8\pi-32}{2\pi^3}x^3\sin x.
\end{equation}
The constants $\frac{17}{720}$ and $\frac{\pi^2+8\pi-32}{2\pi^3}$ in~\eqref{eq3.2} are the best possible.
\end{thm}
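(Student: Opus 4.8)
The plan is to imitate the proof of Theorem~\ref{th3.1}: set
\begin{equation*}
g(x)=\frac{\frac{x}{\sin x}+\bigl[\frac{x/2}{\tan (x/2)}\bigr]^2-2}{x^3\sin x},
\end{equation*}
show that $g$ is strictly increasing on $\bigl(0,\frac\pi2\bigr)$, and then read off the two constants as the one-sided limits $\lim_{x\to0^+}g(x)$ and $\lim_{x\to(\pi/2)^-}g(x)$. Since both the numerator and $x^3\sin x$ are even, $g$ is even and it suffices to treat $x\in\bigl(0,\frac\pi2\bigr)$, the case of negative $x$ following by evenness. Because $x^3\sin x>0$ here, the monotonicity together with the two limits will give both inequalities in~\eqref{eq3.2} simultaneously, with the limits as the best possible constants.

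The decisive step is to turn $g$ into a single power series with positive coefficients. Multiplying numerator and denominator by $\frac1{\sin x}$ gives $g(x)=\frac1{x^3}\bigl[\frac{x}{\sin^2x}+\frac1{\sin x}\bigl(\frac{x/2}{\tan(x/2)}\bigr)^2-\frac2{\sin x}\bigr]$. The first and third terms are handled by Lemma~\ref{lem2.2} and by~\eqref{eq2.1} (in the form $\frac1{\sin x}=\frac1x+\sum_{n\ge1}\frac{(2^{2n}-2)|B_{2n}|}{(2n)!}x^{2n-1}$). The middle term is the crux: using $\sin x=2\sin\frac x2\cos\frac x2$ and $\tan\frac x2=\frac{\sin(x/2)}{\cos(x/2)}$ one finds
\begin{equation*}
\frac1{\sin x}\biggl[\frac{x/2}{\tan (x/2)}\biggr]^2=\frac{x^2}8\cdot\frac{\cos (x/2)}{\sin^3 (x/2)},
\end{equation*}
so that the expansion~\eqref{eq2.5} of Lemma~\ref{lem2.4}, evaluated at $x/2$, applies and produces a genuine power series. (This is presumably why Lemma~\ref{lem2.4} was recorded.)

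Collecting the three expansions, I expect the $\frac1x$ terms to cancel and, after dividing by $x^3$, to obtain
\begin{equation*}
g(x)=\sum_{n=2}^\infty\frac{2^{2n}(2n-3)-2n^2+3n+3}{(2n)!}\,|B_{2n}|\,x^{2n-4},
\end{equation*}
the $n=1$ contribution vanishing. For $n\ge2$ the bracket is positive, since $2^{2n}(2n-3)\ge2^{2n}\ge2n^2>2n^2-3n-3$, and $|B_{2n}|>0$ by Lemma~\ref{B-2q-posit}; hence $g'(x)>0$ on $\bigl(0,\frac\pi2\bigr)$ and $g$ is strictly increasing. The main obstacle is purely the bookkeeping needed to verify the two cancellations and to reduce the combined coefficient to this closed form; once that is done, positivity is immediate.

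Finally I would compute the boundary values. The constant term ($n=2$) gives $\lim_{x\to0^+}g(x)=\frac{17}{720}$, while a direct evaluation at $x=\frac\pi2$, using $\sin\frac\pi2=1$ and $\tan\frac\pi4=1$, gives
\begin{equation*}
\lim_{x\to(\pi/2)^-}g(x)=\frac{\frac\pi2+\frac{\pi^2}{16}-2}{\pi^3/8}=\frac{\pi^2+8\pi-32}{2\pi^3}.
\end{equation*}
Monotonicity then yields $\frac{17}{720}<g(x)<\frac{\pi^2+8\pi-32}{2\pi^3}$ on $\bigl(0,\frac\pi2\bigr)$, and multiplying by $x^3\sin x$ and adding $2$ proves~\eqref{eq3.2}; the sharpness of both constants follows from the fact that $g$ takes values arbitrarily close to each endpoint value.
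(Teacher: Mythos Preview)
Your proof is correct and reaches exactly the same power series as the paper,
\[
g(x)=\sum_{n=2}^\infty\frac{2^{2n}(2n-3)-2n^2+3n+3}{(2n)!}\,|B_{2n}|\,x^{2n-4}
=\sum_{n=2}^\infty\frac{4^{n}(2n-3)+3+3n-2n^2}{(2n)!}\,|B_{2n}|\,x^{2n-4},
\]
but you get there by a genuinely different and tidier decomposition. The paper clears the half-angle by rewriting the whole numerator as
\[
\tfrac14\bigl(2x^2\cos x+4x\sin x+2x^2-x^2\sin^2x-8\sin^2x\bigr)/\sin^3x,
\]
and then has to invoke all of~\eqref{eq2.1}, \eqref{eq2.2}, \eqref{eq2.4}, and~\eqref{eq2.5}, followed by a somewhat lengthy simplification with several cancelling sums. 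Your identity $\frac1{\sin x}\bigl[\frac{x/2}{\tan(x/2)}\bigr]^2=\frac{x^2}{8}\cdot\frac{\cos(x/2)}{\sin^3(x/2)}$ is the key gain: it lets you use only~\eqref{eq2.1}, \eqref{eq2.2}, and~\eqref{eq2.5} (the latter at $x/2$), with three expansions instead of six and with the $1/x$ cancellation and the vanishing of the $n=1$ term visible at once. Your positivity check $2^{2n}(2n-3)\ge 2^{2n}\ge 2n^2>2n^2-3n-3$ for $n\ge2$ is also more direct than the paper's route of proving $b_{n+1}-b_n>0$ and then $b_n\ge b_2=17$. The boundary evaluations and the sharpness argument are identical.
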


\begin{proof}
By using \eqref{eq2.1}, \eqref{eq2.2}, \eqref{eq2.4}, and~\eqref{eq2.5}, the function
\begin{align*}
g(x)&=\frac{\frac{x}{\sin x}+\bigl[\frac{x/2}{\tan (x/2)}\bigr]^2-2}{x^3\sin x}\\
&=\frac{1}{4}\frac{2x^2\cos x+4x\sin x+2x^2-x^2\sin^2 x-8\sin^2x}{x^3\sin^3 x}\\
&=\frac{1}{4x^3}\biggl(\frac{2x^2\cos x}{\sin ^3 x}+\frac{4x}{\sin^2x}+\frac{2x^2}{\sin ^3 x} -\frac{x^2}{\sin x}-\frac{8}{\sin x}\biggr)
\end{align*}
may be expanded as
\begin{align*}
g(x)&=\frac{1}{4x^3}\Biggl[\sum_{n=1}^{\infty}\frac{2^{2n+2}(2n-1)|B_{2n}|}{(2n)!}x^{2n-1} -\sum_{n=2}^{\infty}\frac{2^{2n+1}(2n-1)(n-1)|B_{2n}|}{(2n)!}x^{2n-1}\\
   &\quad+\sum_{n=2}^{\infty}\frac{\bigl(2^{2n}-2\bigr)(2n-1)(2n-2)|B_{2n}|}{(2n)!}x^{2n-1} +\sum_{n=1}^{\infty}\frac{\bigl(2^{2n}-2\bigr)|B_{2n}|}{(2n)!}x^{2n+1}\\
   &\quad-\sum_{n=1}^{\infty}\frac{\bigl(2^{2n}-2\bigr)|B_{2n}|}{(2n)!}x^{2n+1} -\sum_{n=1}^{\infty}\frac{8\bigl(2^{2n}-2\bigr)|B_{2n}|}{(2n)!}x^{2n-1}\Biggr]\\
      &=\frac{1}{4}\sum_{n=2}^{\infty}\frac{(2n-1)2^{2n+2} -8\bigl(2^{2n}-2\bigr)-2(2n-1)(2n-2)}{(2n)!}|B_{2n}|x^{2n-4}\\
   &= \sum_{n=2}^{\infty}\frac{4^{n}(2n-3)+3+3n-2n^2}{(2n)!}|B_{2n}|x^{2n-4}\\
   &\triangleq\sum_{n=2}^{\infty}\frac{b_n}{(2n)!} |B_{2n}|x^{2n-4}.
\end{align*}
Since $b_2=17$ and
\begin{multline*}
b_{n+1}-b_n=4^n(6n-1)-4n+1
=(1+3)^n(6n-1)-4n+1\\
>3n(6n-1)-4n+1
=18n(n-2)+29(n-2)+59
>0
\end{multline*}
for $n\ge 2$, the sequence $b_n$ is increasing and $b_n\ge b_2=17>0$. Thus, the function $g(x)$ is increasing on $\bigl(0,\frac{\pi}{2}\bigr)$. Moreover,
\begin{equation*}
\lim_{x\to 0^+} g(x)=\frac{17}{720}\quad \text{and}\quad  \lim_{x\to(\pi/2)^-}g(x)=\frac{\pi^2+8\pi-32}{2\pi^3}.
\end{equation*}
The proof of Theorem~\ref{th3.2} is complete.
\end{proof}

\begin{thm}\label{th3.3}
For $x>0$, we have
\begin{equation}\label{eq3.3}
2\frac{\sinh x}{x}+\frac{\tanh x}{x}>3+\frac{3}{20}x^3\tanh x.
\end{equation}
The constant $\frac{3}{20}$ is the best possible.
\end{thm}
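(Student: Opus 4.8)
The plan is to follow the proof template of Theorems~\ref{th3.1} and~\ref{th3.2}: introduce the quotient
\[
\phi(x)=\frac{2\frac{\sinh x}{x}+\frac{\tanh x}{x}-3}{x^{3}\tanh x}
\]
and try to read off $\frac{3}{20}$ as a boundary limit. Since $2\frac{\sinh x}{x}+\frac{\tanh x}{x}-3=\frac{3}{20}x^{4}-\frac{3}{56}x^{6}+\cdots$ while $x^{3}\tanh x=x^{4}-\frac13x^{6}+\cdots$, one gets $\lim_{x\to0^{+}}\phi(x)=\frac{3}{20}$. Thus $\frac{3}{20}$ is forced to be the extremal constant, and proving~\eqref{eq3.3} together with its sharpness amounts to showing that $\phi$ is increasing on $(0,\infty)$, for then $\phi(x)>\lim_{x\to0^{+}}\phi(x)=\frac{3}{20}$.

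To invoke Lemma~\ref{lem2.6} I would first clear denominators. Multiplying the numerator and denominator of $\phi$ by $x\cosh x>0$ and using $2\sinh x\cosh x=\sinh 2x$ rewrites the quotient as a ratio of \emph{entire} functions,
\[
\phi(x)=\frac{\sinh 2x+\sinh x-3x\cosh x}{x^{4}\sinh x}.
\]
Expanding both parts in Maclaurin series gives
\[
\sinh 2x+\sinh x-3x\cosh x=\sum_{k\ge2}\frac{2^{2k+1}-6k-2}{(2k+1)!}x^{2k+1},\qquad x^{4}\sinh x=\sum_{j\ge0}\frac{1}{(2j+1)!}x^{2j+5};
\]
the vanishing of the $k=0$ and $k=1$ numerator coefficients is exactly the double cancellation producing the leading term $\frac{3}{20}x^{5}$. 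After removing the common factor $x^{5}$, both are power series in $x^{2}$ convergent on all of $\mathbb{R}$, so Lemma~\ref{lem2.6} applies with $R=\infty$ and controls $\phi$ on the entire half-line rather than merely near the origin.

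By Lemma~\ref{lem2.6}, the monotonicity of $\phi$ is governed by that of the coefficient ratio
\[
\rho_{m}=\frac{2^{2m+5}-6m-14}{(2m+2)(2m+3)(2m+4)(2m+5)},\qquad m\ge0,
\]
and settling the correct monotonicity of $\rho_{m}$ is the step I expect to be the main obstacle. It is the exact counterpart of the $b_{n+1}-b_{n}$ estimate in Theorem~\ref{th3.2}: here the numerator's exponential growth $2^{2m+5}=32\cdot4^{m}$ competes both against the quartic denominator and against the linear correction $6m+14$, so the sign of the successive differences of $\rho_{m}$ is genuinely delicate and the small indices (for instance $\rho_{0}=\frac{3}{20}$ and $\rho_{1}=\frac{9}{70}$) must be examined separately from the $4^{m}$-dominated tail. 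My plan would be to pin down the first few ratios by direct computation and to control the tail by a growth estimate of the type used in Theorem~\ref{th3.2}; should the coefficient ratio fail to be monotone on an initial stretch, I would split $(0,\infty)$ accordingly, handling the large-$x$ range through the crude bound $\tanh x<1$ together with the exponential growth of $\sinh x$. Once the monotonicity of $\phi$ is secured, the limit $\lim_{x\to0^{+}}\phi(x)=\frac{3}{20}$ would finish both the inequality and the best-possibility claim.
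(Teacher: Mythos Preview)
Your setup is essentially the paper's: the paper also rewrites the quotient as
\[
F(x)=\frac{\sinh 2x+\sinh x-3x\cosh x}{x^{4}\sinh x}
\]
and then tries to apply Lemma~\ref{lem2.6}. The only cosmetic difference is that the paper first passes to $f'(x)/g'(x)$ and invokes Lemma~\ref{lem2.5}; since differentiating an odd power series multiplies the $x^{2k+1}$-coefficient of numerator and denominator by the same factor $2k+1$, the paper's ratio $c_{n}$ coincides, after the reindexing $n=m+2$, with your $\rho_{m}$.

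The genuine gap is that the drop you already observed,
\[
\rho_{0}=\tfrac{3}{20}=0.15,\qquad \rho_{1}=\tfrac{9}{70}\approx 0.1286,
\]
is not a removable ``initial stretch'' but is fatal to the whole statement. Dividing the two Maclaurin expansions you wrote down gives
\[
\phi(x)=\frac{3}{20}-\frac{1}{280}\,x^{2}+O(x^{4}),
\]
so $\phi$ \emph{decreases} away from $0$ and the inequality $\phi(x)>\tfrac{3}{20}$ fails for all small $x>0$. Concretely, at $x=1$ one has $2\sinh 1+\tanh 1\approx 3.1120$ while $3+\tfrac{3}{20}\tanh 1\approx 3.1142$, so \eqref{eq3.3} is violated; numerically $\phi(1)\approx 0.1471$ and $\phi(2)\approx 0.1438$, well below $\tfrac{3}{20}$. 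Hence no splitting of $(0,\infty)$ or large-$x$ crude bound can rescue the plan, because the claim itself is false on a whole interval containing $x=1$.

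The paper's own argument breaks at precisely the step you flagged: it asserts that
\[
c_{n+1}-c_{n}=\frac{(6n^{2}-17n+1)4^{n}+18n^{2}+23n-1}{2n(2n+3)(4n^{2}-1)(n^{2}-1)}>0
\]
for $n\ge 2$, but at $n=2$ the numerator equals $(-9)\cdot 16+117=-27<0$, matching $c_{3}=\rho_{1}<\rho_{0}=c_{2}$. The displayed positivity is valid only from $n\ge 3$, which suffices to make $\phi$ eventually increasing but not to keep it above its initial value $\tfrac{3}{20}$. In short, the best constant in \eqref{eq3.3} is $\inf_{x>0}\phi(x)$, which is strictly smaller than $\tfrac{3}{20}$; Theorem~\ref{th3.3} as stated cannot be proved.
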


\begin{proof}
Let
\begin{equation*}
F(x)=\frac{\frac{2\sinh x}{x}+\frac{\tanh x}{x}-3}{x^3\tanh x}
=\frac{\sinh 2x+\sinh x-3x\cosh x}{x^4\sinh x}
\end{equation*}
and let
\begin{equation*}
f(x)=\sinh 2x+\sinh x-3x\cosh x\quad \text{and}\quad g(x)=x^4\sinh x.
\end{equation*}
From the power series expansions
\begin{equation}\label{sinh-cosh-expansion}
\sinh x=\sum_{n=0}^{\infty}\frac{x^{2n+1}}{(2n+1)!}\quad \text{and}\quad
\cosh x=\sum_{n=0}^{\infty}\frac{x^{2n}}{(2n)!},
\end{equation}
it follows that
\begin{align*}
f'(x)&= 2\cosh 2x-2\cosh x-3x\sinh x\\ &=\sum_{n=0}^{\infty}\frac{2^{2n+1}x^{2n}}{(2n)!} -\sum_{n=0}^{\infty}\frac{2x^{2n}}{(2n)!} -\sum_{n=0}^{\infty}\frac{3x^{2n+2}}{(2n+1)!}\\
&= \sum_{n=0}^{\infty}\frac{\bigl(2^{2n+1}-2\bigr)x^{2n}}{(2n)!} -\sum_{n=0}^{\infty}\frac{3x^{2n+2}}{(2n+1)!}\\
   &=\sum_{n=1}^{\infty}\frac{\bigl(2^{2n+1}-2\bigr)x^{2n}}{(2n)!} -\sum_{n=1}^{\infty}\frac{6nx^{2n+2}}{(2n)!}\\
&=\sum_{n=2}^{\infty}\frac{\bigl(2^{2n+1}-6n-2\bigr)x^{2n}}{(2n)!}\\
&\triangleq\sum_{n=2}^{\infty}a_nx^{2n}
\end{align*}
and
\begin{multline*}
g'(x)=4x^3\sinh x+x^4\cosh x
= \sum_{n=0}^{\infty}\frac{4x^{2n+4}}{(2n+1)!}+\sum_{n=0}^{\infty}\frac{x^{2n+4}}{(2n)!}\\
= \sum_{n=0}^{\infty}\frac{(2n+5)x^{2n+4}}{(2n+1)!}
=\sum_{n=1}^{\infty}\frac{(2n+3)x^{2n+2}}{(2n-1)!}\\
=\sum_{n=2}^{\infty}\frac{4n(n-1)\bigl(4n^2-1\bigr)x^{2n}}{(2n)!}
\triangleq\sum_{n=2}^{\infty}b_nx^{2n}.
\end{multline*}
It is easy to see that the quotient
\begin{equation*}
c_n=\frac{a_n}{b_n}=\frac{2^{2n+1}-6n-2}{4n(n-1)\bigl(4n^2-1\bigr)}
\end{equation*}
satisfies
\begin{equation*}
    c_{n+1}-c_n=\frac{\bigl(6n^2-17n+1\bigr)4^n+18n^2+23n-1}{2n(2n+3)\bigl(4n^2-1\bigr)(n^2-1)}>0
\end{equation*}
for $n\ge2$. This means that the sequence $c_n$ is increasing. By Lemma~\ref{lem2.6}, the function $G(x)=\frac{f'(x)}{g'(x)}$ is increasing on $(0,\infty)$, and so, by Lemma~\ref{lem2.5}, the function $F(x)=\frac{f(x)}{g(x)}=\frac{f(x)-f(0)}{g(x)-g(0)}$ is increasing on $(0,\infty)$. Moreover, it is not difficult to obtain $\lim_{x \to 0^+}F(x)=c_2=\frac{3}{20}$. Theorem~\ref{th3.3} is thus proved.
\end{proof}

\begin{thm}\label{th3.4}
For $x>0$,
\begin{equation}\label{eq3.6}
\frac{\sinh x}{x}+\biggl[\frac{\tanh(x/2)}{x/2}\biggr]^2> 2+\frac{23}{720}x^3\tanh x.
\end{equation}
The number $\frac{23}{720}$ in~\eqref{eq3.6} is the best possible.
\end{thm}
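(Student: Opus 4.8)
The plan is to follow the method of Theorem~\ref{th3.3}, since \eqref{eq3.6} is the hyperbolic companion of \eqref{eq3.2}. I would set
\begin{equation*}
F(x)=\frac{\frac{\sinh x}{x}+\bigl[\frac{\tanh(x/2)}{x/2}\bigr]^2-2}{x^3\tanh x},\qquad x>0,
\end{equation*}
so that proving \eqref{eq3.6} together with the optimality of $\frac{23}{720}$ is equivalent to showing that $F$ is strictly increasing on $(0,\infty)$ and that $\lim_{x\to0^+}F(x)=\frac{23}{720}$. To eliminate the half-argument I would first record the identity $\bigl[\frac{\tanh(x/2)}{x/2}\bigr]^2=\frac{4(\cosh x-1)}{x^2(\cosh x+1)}$, which comes from $\tanh^2\frac x2=\frac{\cosh x-1}{\cosh x+1}$, and write $\tanh x=\frac{\sinh x}{\cosh x}$.

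Substituting these and clearing all denominators by multiplying the numerator and denominator of $F$ by $x^2\cosh x(\cosh x+1)$, I would obtain $F(x)=\frac{f(x)}{g(x)}$ with $g(x)=x^5\sinh x(\cosh x+1)$ and $f(x)=\cosh x\bigl[x\sinh x(\cosh x+1)+4(\cosh x-1)-2x^2(\cosh x+1)\bigr]$, both entire, even, and vanishing at the origin (in fact to order $x^6$), so that $F(x)=\frac{f(x)-f(0)}{g(x)-g(0)}$. The key simplification is to linearise $f$ by the product-to-sum identities $2\sinh x\cosh x=\sinh 2x$, $\sinh 2x\cosh x=\frac12(\sinh 3x+\sinh x)$, and $2\cosh^2x=\cosh 2x+1$; this turns $f$ into an explicit combination of $\sinh kx$ and $\cosh kx$ for $k=1,2,3$ together with the polynomials $x^2$ and $1$, while $g$ already linearises as $g(x)=x^5\bigl(\frac12\sinh 2x+\sinh x\bigr)$. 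In this form both $f'$ and $g'$ admit closed power series $f'(x)=\sum_n a_nx^{2n-1}$ and $g'(x)=\sum_n b_nx^{2n-1}$, obtained termwise from \eqref{sinh-cosh-expansion}, and the coefficients $b_n$ are plainly positive.

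The heart of the proof, and the step I expect to be the main obstacle, is to show that the ratio $c_n=\frac{a_n}{b_n}$ is strictly increasing. Here the linearised form is decisive: the coefficients $a_n$ are dominated by the contribution of $\sinh 3x$, which grows like $\frac{3^{2n}}{(2n)!}$, whereas $b_n$ is governed by $\sinh 2x$ and grows only like $\frac{2^{2n}}{(2n)!}$; hence $c_n$ behaves like $(9/4)^n$ and must eventually increase. To make this rigorous for every admissible index I would, exactly as in the proof of Theorem~\ref{th3.3}, reduce the sign of $c_{n+1}-c_n$ to an inequality of the shape $(\text{polynomial in }n)\,3^{2n}+(\text{lower-frequency and polynomial terms})>0$ and bound $3^{2n}=(1+2)^{2n}$ from below by a few leading binomial terms so that the fastest-growing part dominates the remainder; the finitely many smallest indices are checked directly. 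Once $c_n$ is shown to be increasing, Lemma~\ref{lem2.6} yields that $G(x)=\frac{f'(x)}{g'(x)}$ is increasing on $(0,\infty)$, and then Lemma~\ref{lem2.5} with $a=0$ gives that $F=\frac fg$ is increasing on $(0,\infty)$.

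It remains to compute the boundary value. From $\frac{\sinh x}{x}=1+\frac{x^2}{6}+\frac{x^4}{120}+\cdots$ and $\bigl[\frac{\tanh(x/2)}{x/2}\bigr]^2=1-\frac{x^2}{6}+\frac{17}{720}x^4+\cdots$, the numerator of $F$ equals $\frac{23}{720}x^4+O(x^6)$ while $x^3\tanh x=x^4+O(x^6)$; equivalently, the series $f'$ and $g'$ both begin with the fifth power, and the ratio of their leading coefficients is $\frac{23/60}{12}=\frac{23}{720}$. Hence $\lim_{x\to0^+}F(x)=\frac{23}{720}$. Since $F$ is strictly increasing, $F(x)>\frac{23}{720}$ for all $x>0$, which is precisely \eqref{eq3.6}, and $\frac{23}{720}$ cannot be replaced by any larger constant because it is the infimum of $F$, attained only as $x\to0^+$. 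This would complete the proof.
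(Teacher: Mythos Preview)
Your proposal is correct and matches the paper's argument almost exactly: the same $f$ and $g$, the same linearisation via product-to-sum identities, the same series-coefficient ratio $c_n$, the same binomial lower bound for the dominant exponential term, and the same limit $\frac{23}{720}$ at $0^+$. The only difference is that the paper expands $f$ and $g$ themselves (both start at $x^6$) and applies Lemma~\ref{lem2.6} directly to $F=f/g$, whereas you differentiate first and then invoke Lemma~\ref{lem2.5}; since the coefficient ratios of $f'/g'$ and of $f/g$ coincide (the factor $2n$ cancels), your extra step is harmless but unnecessary.
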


\begin{proof}
Let
\begin{align*}
F(x)&=\frac{\frac{\sinh x}{x}+\bigl[\frac{\tanh(x/2)}{x/2}\bigr]^2-2}{x^3\tanh x}\\
&=\frac{\cosh x\bigl(x\sinh x\cosh x+x\sinh x+4\cosh x -2x^2\cosh x-4-2x^2\bigr)}{x^5\sinh x(1+\cosh x)}
\end{align*}
and let
\begin{equation*}
f(x)=\cosh x\bigl(x\sinh x\cosh x+x\sinh x+4\cosh x -2x^2\cosh x-4-2x^2\bigr)
\end{equation*}
and
\begin{equation*}
g(x)=x^5\sinh x(1+\cosh x).
\end{equation*}
By the power series expansions in~\eqref{sinh-cosh-expansion}, we obtain
\begin{align*}
f(x)&=x\sinh x+\frac{1}{4}x\sinh(3x)-\frac{3}{4}x\sinh x+\frac{1}{2}\sinh(2x)-4\cosh x\\
 &\quad+2\cosh(2x)-x^2\cosh(2x)-2x^2\cosh x+2-x^2\\
&=\sum_{n=0}^{\infty}\frac{\frac{3^{2n+1}}{4}-n2^{2n+1} -4n-\frac{7}{4}}{(2n+1)!}x^{2n+2}+\sum_{n=0}^{\infty}\frac{2^{2n+1}-4}{(2n)!}x^{2n}+2-x^2\\
&=\sum_{n=1}^{\infty}\frac{\frac{3^{2n+1}}{4}-n2^{2n+1} -4n-\frac{7}{4}}{(2n+1)!}x^{2n+2}+\sum_{n=2}^{\infty}\frac{2^{2n+1}-4}{(2n)!}x^{2n}\\
&=\sum_{n=2}^{\infty}\frac{\frac{3^{2n-1}}{4}-(n-1)2^{2n-1} -4n+\frac{9}{4}}{(2n-1)!}x^{2n}+\sum_{n=2}^{\infty}\frac{2^{2n+1}-4}{(2n)!}x^{2n}\\
&=\sum_{n=3}^{\infty}\frac{n\bigl[\frac{3^{2n-1}}{2}-(n-1)2^{2n} -8n+\frac{9}{2}\bigr]+2^{2n+1}-4}{(2n)!}x^{2n}\\
&\triangleq\sum_{n=3}^{\infty}a_nx^{2n}
\end{align*}
and
\begin{multline*}
    g(x)=x^5\biggl[\frac{1}{2}\sinh(2x)+\sinh x\biggr]
    =\sum_{n=0}^{\infty}\frac{1+2^{2n}}{(2n+1)!}x^{2n+6}
    =\sum_{n=3}^{\infty}\frac{1+2^{2n-6}}{(2n-5)!}x^{2n}\\
    =\sum_{n=3}^{\infty}\frac{\bigl(1+2^{2n-6}\bigr)(2n-4)(2n-3)(2n-2)(2n-1)2n}{(2n)!}x^{2n}
    \triangleq\sum_{n=3}^{\infty}b_nx^{2n}.
\end{multline*}
The ratio
\begin{equation*}
c_n=\frac{a_n}{b_n}=\frac{n\bigl[\frac{3^{2n-1}}{2}-(n-1)2^{2n} -8n+\frac{9}{2}\bigr]+2^{2n+1}-4}{\bigl(1+2^{2n-6}\bigr)(2n-4)(2n-3)(2n-2)(2n-1)2n}
\end{equation*}
satisfies
\begin{align*}
c_3&=\frac{23}{720}=0.031\dotsc, & c_4&=\frac{17}{336}=0.050\dotsc,& &\text{and} & c_5&=\frac{5099}{85680}=0.059\dotsc.
\end{align*}
Furthermore, when $n\ge 6$,  by a simple computation, we have
\begin{equation*}
    c_{n+1}-c_n=\frac{f_1(n)+f_2(n)+f_3(n)+f_4(n)}{3n(16+4^n)(64+4^n)(n-2)(2n-3)\bigl(4n^2-1\bigr)(n^2-1)},
\end{equation*}
where
\begin{align*}
    f_1(n)&=16^n\bigl(144n^3-24n^2-648n+240\bigr)\\
    &=16^n\bigl[144 n (n-6)^2+1704 n (n-6)+4392 (n-6)+26592\bigr]\\
    &>0,\\
    f_2(n)&=9^n\bigl(1024n^3-3072n^2-640n+3456\bigr)\\
    &=9^n\bigl[1024 n (n-6)^2+9216 n (n-6)+128 (139 n+27)\bigr]\\
    &>0,\\
    f_4(n)&=18432n^3+7680n^2-3456n-15744\\
    &=18432 n (n-6)^2+228864 n (n-6)+384 (1839 n-41)\\
    &>0,
\end{align*}
and
\begin{align*}
f_3(n)&=4^n\bigl[9^n\bigl(10n^3-57n^2-13n+54\bigr)\\
&\quad-2016n^4+8622n^3+5541n^2-33327n+17490\bigr]\\
&=4^n\bigl\{9^n\bigl[10 n (n-6)^2+63 n (n-6)+5 (n-6)+84\bigr]\\
&\quad-2016n^4+8622n^3+5541n^2-33327n+17490\bigr\}\\
&>4^n\bigl[84(1+8)^n-2016n^4+8622n^3+5541n^2-33327n+17490\bigr]\\
&>4^n\biggl\{84\biggl[1+8\binom{n}{1}+8^2\binom{n}{2}+8^3\binom{n}{3}+8^4\binom{n}{4}\biggr]\\
&\quad-2016n^4+8622n^3+5541n^2-33327n+17490\biggr\}\\
&=17574 - 107023 n + 144421 n^2 - 70226 n^3 + 12320 n^4\\
&=12320 (n-6)^4+225454 (n-6)^3+1541473 (n-6)^2\\
&\quad+4686101(n-6)+5372496\\
&>0
\end{align*}
for $n\ge6$. Hence, the sequence $c_n$ is increasing. By Lemma~\ref{lem2.6}, the function $F(x)$ is increasing.
Finally, it is easy to see that $\lim_{x \to 0^+}F(x)=c_3=\frac{23}{720}$.
The proof of Theorem~\ref{th3.4} is complete.
\end{proof}

\begin{thm}\label{th3.5}
For $0<|x|< \pi/2$, we have
\begin{equation}\label{eq3.8}
4+\frac{1}{10}x^3\sin x<3\frac{x}{\sin x}+\cos x<4+\frac{12\pi-32}{\pi^3}x^3\sin x.
\end{equation}
The numbers $\frac{1}{10}$ and $\frac{12\pi-32}{\pi^3}$ are the best possible.
\end{thm}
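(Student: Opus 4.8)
The plan is to follow the proof of Theorem~\ref{th3.1} almost verbatim. I would set
\[
f(x)=\frac{3\frac{x}{\sin x}+\cos x-4}{x^3\sin x}
=\frac{\frac{3x}{\sin^2x}+\cot x-\frac{4}{\sin x}}{x^3},
\qquad x\in\bigl(0,\tfrac\pi2\bigr),
\]
where the second expression comes from multiplying numerator and denominator by $\frac1{\sin x}$ and writing $\frac{\cos x}{\sin x}=\cot x$. The purpose of this rearrangement is that each of the three pieces now carries a known Maclaurin expansion in odd powers of $x$: the term $\frac{3x}{\sin^2x}$ through Lemma~\ref{lem2.2}, the term $\cot x$ through Lemma~\ref{lem2.0}, and the term $\frac{4}{\sin x}$ through the cosecant series~\eqref{4.3.68} combined with Lemma~\ref{B-2q-posit}.

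Substituting these three expansions, I expect the three $\frac1x$ contributions, namely $\frac3x$, $\frac1x$, and $-\frac4x$, to cancel exactly, while the coefficient of $x^{2n-1}$ in the numerator collapses, after the identity $2\bigl(2^{2n-1}-1\bigr)=2^{2n}-2$, to
\[
\frac{3\cdot2^{2n}(2n-1)-2^{2n}-4\bigl(2^{2n}-2\bigr)}{(2n)!}\,|B_{2n}|
=\frac{2^{2n+1}(3n-4)+8}{(2n)!}\,|B_{2n}|.
\]
Division by $x^3$ shifts the exponent to $x^{2n-4}$, and since the bracket $2^{2n+1}(3n-4)+8$ vanishes for $n=1$ and is strictly positive for every $n\ge2$, the result should be
\[
f(x)=\sum_{n=2}^\infty\frac{2^{2n+1}(3n-4)+8}{(2n)!}\,|B_{2n}|\,x^{2n-4},
\]
a power series all of whose coefficients are positive. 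Consequently $f$ is the sum of the constant $n=2$ term and strictly increasing monomials, hence is strictly increasing on $\bigl(0,\frac\pi2\bigr)$; no delicate monotone-ratio argument of the kind needed in Theorems~\ref{th3.3} and~\ref{th3.4} arises here.

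It then remains to evaluate the two endpoints. The $n=2$ term gives $\lim_{x\to0^+}f(x)=3|B_4|=\frac1{10}$, while substituting $x=\frac\pi2$ in the closed form, with $\sin\frac\pi2=1$ and $\cos\frac\pi2=0$, gives $\lim_{x\to(\pi/2)^-}f(x)=\frac{3\pi/2-4}{(\pi/2)^3}=\frac{12\pi-32}{\pi^3}$. Monotonicity together with these limits yields $\frac1{10}<f(x)<\frac{12\pi-32}{\pi^3}$ on the interval, and multiplying through by $x^3\sin x>0$ produces~\eqref{eq3.8}; strict monotonicity makes both constants best possible. Finally, because $\frac{x}{\sin x}$, $\cos x$, and $x^3\sin x$ are all even in $x$, the inequality extends from $\bigl(0,\frac\pi2\bigr)$ to the full range $0<|x|<\frac\pi2$. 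The only step demanding genuine care is the algebraic simplification of the $x^{2n-1}$ coefficient and the check that it vanishes at $n=1$, but this is entirely routine.
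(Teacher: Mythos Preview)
Your proposal is correct and follows the paper's own argument essentially verbatim: the same auxiliary function $f(x)$, the same three series expansions from Lemmas~\ref{lem2.0}, \ref{lem2.1}, and~\ref{lem2.2}, and the same endpoint evaluations. Your coefficient $2^{2n+1}(3n-4)+8$ is just a factored form of the paper's $(6n-8)2^{2n}+8$, and your remarks about the $n=1$ vanishing, the evenness extension, and the best-possible claim are all accurate.
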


\begin{proof}
Let
\begin{equation*}
f(x)=\frac{3\frac{x}{\sin x}+\cos x-4}{x^3\sin x}
=\frac{1}{x^3}\biggl(\frac{3x}{\sin^2x}+\cot x-\frac{4}{\sin x}\biggr).
\end{equation*}
By \eqref{eq2.1}, \eqref{eq2.0}, and \eqref{eq2.2}, we have
\begin{align*}
   f(x)&=\frac{1}{x^3}\Biggl[\frac{3}{x}-\sum_{n=1}^{\infty}\frac{3(2n-1)2^{2n}}{(2n)!}|B_{2n}|x^{2n-1} +\frac{1}{x}\\
   &\quad-\sum_{n=1}^{\infty}\frac{2^{2n}}{(2n)!}|B_{2n}|x^{2n-1}-\frac{4}{x} -\sum_{n=1}^{\infty}\frac{4\bigl(2^{2n}-2\bigr)}{(2n)!}|B_{2n}|x^{2n-1}\Biggr]\\
   &=\sum_{n=1}^{\infty}\frac{3(2n-1)2^{2n}-2^{2n}-4\bigl(2^{2n}-2\bigr)}{(2n)!}|B_{2n}|x^{2n-4}\\
  &=\sum_{n=1}^{\infty}\frac{(6n-8)2^{2n}+8}{(2n)!}|B_{2n}|x^{2n-4}\\
   &=\sum_{n=2}^{\infty}\frac{(6n-8)2^{2n}+8}{(2n)!}|B_{2n}|x^{2n-4}.
\end{align*}
This shows that the function $f(x)$ is increasing on $\bigl(0,\frac{\pi}{2}\bigr)$. Moreover, it is straightforward to obtain
\begin{equation*}
\lim_{x\to 0^+} f(x)=a_2=\frac{1}{10} \quad\text{and}\quad
\lim_{x\to (\pi/2)^-}f(x)=\frac{12\pi-32}{\pi^3}.
\end{equation*}
The proof of Theorem~\ref{th3.5} is complete.
\end{proof}

\end{document}